\documentclass[12pt]{article}

\usepackage{amsmath,amsfonts,amssymb}
\usepackage{epsf}
\RequirePackage{amsfonts,amssymb,amsmath,amscd,amsthm}
\usepackage{epsfig,tabularx,graphicx,float}
\usepackage{graphicx}
\linespread{1.5}
\usepackage[a4paper]{geometry}

\usepackage{parskip}
\usepackage[utf8]{inputenc}
\usepackage{times}
\usepackage{float}
\usepackage{multicol}
\usepackage{multirow}

\usepackage{float}
\newtheorem{thm}{Theorem}[section]

\newtheorem{defi}[thm]{Definition}

\newtheorem{lem}[thm]{Lemma}
\begin{document}

	\begin{center}
		\textbf{Characterization of the pseudo-scaling functions on Vilenkin group}\\[1 cm]
		
		\text{Prasadini Mahapatra}\\

	\end{center}
	\begin{abstract}
	The study of wavelets, originated from it’s applications in diverse fields, was combined together by it’s Mathematical properties. Initially, all the wavelets and it’s variants were explored in the real space $\mathbb{R}^{n}$. But now, these are being studied in different abstract settings. The present paper also contributes to this extension. Vilenkin groups, introduced by F. Ya Vilenkin, form a class of locally compact abelian groups. In this paper, Parseval frame multiwavelets associated to multiresolution analysis (MRA) are characterized in $L^{2}(G)$, where $G$ is the Vilenkin group. Further, we introduce the pseudo-scaling function along with a class of generalized low pass filters and study their properties in Vilenkin group.	
	\end{abstract}
	
	\section{Introduction}
	
	During the last two decades, several authors studied more generalizations and extensions of wavelets. Multiresolution analysis (MRA) is very fundamental tool to establishment of scaling function, which appeared in very different contexts. This paper is related to one such generalization of wavelets.

	Walsh functions were introduced by J. Walsh     In 1923, J. Walsh  introduced that the linear combination of Haar functions is known as Walsh functions. At first, N. J. Fine and N. Ya Vilenkin independently determined that Walsh system is the group of characters of the Cantor dyadic group. A large class of locally compact abelian groups, called Vilenkin groups is introduced by Vilenkin. Cantor dyadic group is a particular case. Refinable equation gives refinable function which generates MRA and hence wavelets, if the mask satisfies certain conditions. Necessary and sufficient conditions were given over the mask of scaling function $\phi$ in terms of modified Cohen's condition and blocked sets such that $\phi$ generates an MRA.  
	
	Wavelets and multiwavelets related work on Vilenkin group has been done in several paper. In case of Vilenkin group if the associated prime $p$ is greater than 2, then MRA generates a multiwavelet set having $p-1$ functions. 

	In section 2, It has been introduced that the pseudo scaling functions associated with the filters and generalized filters. Then we characterize the generalized low pass filters, which motivate us to construct the subclass
	of MRA Parseval frame multiwavelets. Furthermore, we have studied that the associated class of pseudo-scaling functions which are not necessarily obtained from a multiresolution analysis.

	\subsection{Preliminaries}
	 Vilenkin group $ G $ is defined as the group of sequences
	$$
	x=(x_{j})=(..., 0, 0, x_{k}, x_{k+1}, x_{k+2},...),
	$$
	where $x_{j}\in \lbrace 0, 1, ..., p-1\rbrace$, $p$ is prime, for $ j \in \mathbb{Z} $ and $ x_{j}=0 $, for $ j < k = k(x)$. The group operation on $ G $, denoted by $ \oplus $, is defined as coordinatewise addition modulo $ p $:
	$$
	(z_{j})=(x_{j})\oplus (y_{j})\Leftrightarrow z_{j}=x_{j} + y_{j}(\text{mod}\: p), \text{ for } j \in \mathbb{Z}.
	$$
	$\theta$ denotes the identity element (zero) of $G$.
	Let
	$$
	U_{l}=\lbrace(x_{j}) \in G : x_{j}=0\: \text{for}\: j\leq l\rbrace ,\qquad l\in \mathbb{Z},
	$$
	be a system of neighbourhoods of zero in $G$. In case of topological groups if we know neighbourhood system $\lbrace U_l \rbrace_{l \in \mathbb{Z}}$ of zero, then we can determine neighbourhood system of every point $x=(x_j) \in G$ given by $\lbrace U_l \oplus x \rbrace_{l \in \mathbb Z}$, which in turn generates a topology on $G$.

	Let $ U=U_{0} $ and $ \ominus $ denotes the inverse operation of $ \oplus $. The Lebesgue spaces $ L^{q}(G),\: 1\leq q\leq \infty $, are defined with respect to the Haar measure $ \mu $ on Borel subsets of $ G $ normalized by $ \mu(U)=1 $.

	The group dual to $ G $ is denoted by $ G^{*} $ and consists of all sequences of the form
	$$
	\omega=(\omega_{j})=(...,0,0,\omega_{k},\omega_{k+1},\omega_{k+2},...),
	$$
	where $ \omega_{j} \in \lbrace 0,1,...,p-1\rbrace $, for $ j \in \mathbb{Z} $ and $ \omega_j=0 $, for $ j<k=k(\omega) $. The operations of addition and subtraction, the neighbourhoods $\lbrace U_{l}^{*}\rbrace $ and the Haar measure $ \mu^{*} $ for $ G^{*} $ are defined as above for $ G $. Each character on $ G $ is defined as
	$$
	\chi(x,\omega)=\exp\bigg(\frac{2\pi i}{p}\sum_{j\in \mathbb Z}{x_{j}w_{1-j}}\bigg),\quad x \in G,
	$$
	for some $ \omega \in G^{*} $.

	Let $ H = \lbrace (x_{j}) \in G\: |\: x_{j} =0, \; \text{ for } j>0\rbrace $ be a discrete subgroup in $ G $  and $A$ be an automorphism on $G$ defined by $ (Ax)_j=x_{j+1} $, for $x=(x_j) \in G$. From the definition of annihilator and above definition of character $\chi$, it follows that the annihilator $ H^{\perp} $ of the subgroup $ H $ consists of all sequences $ (\omega_j) \in G^{*} $ which satisfy $ \omega_j=0 $ for $ j>0 $.

	Let $ \lambda:G\longrightarrow \mathbb R_+ $ be defined by
	\begin{equation*}
	\lambda(x)=\sum_{j \in \mathbb Z}{x_{j} p^{-j}}, \qquad x=(x_j) \in G.
	\end{equation*}
	It is obvious that the image of $ H $ under $ \lambda $ is the set of non-negative integers $ \mathbb Z_{+} $. For every $ \alpha \in \mathbb Z_{+} $, let $ h_{[\alpha]} $ denote the element of $ H $ such that $ \lambda(h_{[\alpha]})=\alpha $. For $ G^{*} $, the map $ \lambda^{*}:G^{*}\longrightarrow \mathbb R_{+} $, the automorphism $ B \in \text{Aut } G^{*} $, the subgroup $ U^{*} $ and the elements $ \omega_{[\alpha]} $ of $ H^{\perp} $ are defined similar to $ \lambda$, $A$, $U$ and $h_{[\alpha]}$, respectively. 

	The generalised Walsh functions for $ G $ are defined by
	\begin{equation*}
	W_\alpha(x)=\chi(x,\omega_{[\alpha]}),\qquad \alpha \in \mathbb Z_+, x \in G.
	\end{equation*}
	These functions form an orthogonal set for $L^2(U)$, that is,
	\begin{equation*}
	\int_{U}{W_{\alpha}(x) \overline{W_{\beta}(x)}d\mu(x)}=\delta_{\alpha,\beta},\qquad \alpha,\beta \in \mathbb Z_{+},
	\end{equation*}
	where $ \delta_{\alpha,\beta} $ is the Kronecker delta. The system $ {W_{\alpha}} $ is complete in $ L^{2}(U) $. The corresponding system for $ G^{*} $ is defined by
	\begin{equation*}
	W_{\alpha}^{*}(\omega)=\chi(h_{[\alpha]},\omega),\qquad \alpha \in \mathbb Z_{+}, \omega \in G^{*}.
	\end{equation*}
	The system $\lbrace W_{\alpha}^{*} \rbrace$ is an orthonormal basis of $ L^{2}(U^{*}) $.

	For positive integers $ n $ and $ \alpha $,
	\begin{equation*}
	U_{n,\alpha}=A^{-n}(h_{[\alpha]}) \oplus A^{-n}(U).
	\end{equation*}


	\subsection{Wavelets on Vilenkin group}
	
	 In \cite{farkov3}, Farkov considered the Strang-fix condition, partition of unit property and the stability of scaling functions on Vilenkin group. Necessary and sufficient conditions are given for scaling functions to generate an MRA in the $L^2$ space on Vilenkin groups by using modified Cohen's condition and blocked sets.

	\subsubsection{Refinable function and mask}
	
	\begin{defi}
		
		Let $ L_{c}^{2}(G) $ be the set of all compactly supported functions in $ L^{2}(G) $. A function $ \phi \in L_{c}^{2}(G) $ is said to be a \textit{refinable function}, if it satisfies an equation of the type
		\begin{equation}
		\phi(x)=p\sum_{\alpha =0}^{p^{n}-1}{a_{\alpha}\phi (Ax\ominus h_{[\alpha ]})}.
		\end{equation}
		The above functional equation is called the \textit{refinement equation}. The generalized Walsh polynomial
		\begin{equation}
		m(\omega)=\sum_{\alpha =0}^{p^{n}-1}{a_{\alpha} \overline{W_{\alpha}^{*}{(\omega)}}}
		\end{equation}
		is called the \textit{mask} of the refinement equation (or the mask of its solution $ \phi $).
	\end{defi}

	\begin{thm}
		\textit{Let $ \phi \in L_{c}^{2}{(G)} $ be a solution of the refinement equation, and let $ \widehat{\phi}(\theta)=1 $. Then,
			$$
			\sum_{\alpha =0}^{p^{n}-1}{a_{\alpha}}=1, \qquad \text{supp } \phi \subset U_{1-n},
			$$
			and
			$$ 
			\widehat{\phi}(\omega)=\prod_{j=1}^{\infty}{m(B^{-j}\omega)}.
			$$
			Moreover, the following properties are true:\\
			$1$. $ \widehat{\phi}(h^{*})=0 $, for all $ h^{*} \in H^{\perp}\setminus {\lbrace \theta\rbrace} $ (the modified Strang-Fix condition),\\
			$2$. $ \sum_{h \in H}{\phi (x\oplus h)}=1 $, for almost every $ x \in G $ (the partition of unit property)}.
	\end{thm}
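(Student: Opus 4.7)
The natural approach is to Fourier-transform the refinement equation. Using the Vilenkin-group rules $\widehat{f(A\cdot)}(\omega) = p^{-1}\widehat{f}(B^{-1}\omega)$ (dilation) and $\widehat{f(\cdot\ominus h_{[\alpha]})}(\omega) = \overline{W_\alpha^*(\omega)}\,\widehat{f}(\omega)$ (translation by $h_{[\alpha]}$), the refinement equation collapses to the multiplicative identity
\[
\widehat{\phi}(\omega) \;=\; m(B^{-1}\omega)\,\widehat{\phi}(B^{-1}\omega).
\]
Evaluating at $\omega = \theta$ and using $W_\alpha^*(\theta) = \chi(h_{[\alpha]},\theta) = 1$ gives $m(\theta) = \sum_\alpha a_\alpha$; combined with $\widehat{\phi}(\theta) = 1$, this forces $\sum_{\alpha=0}^{p^n-1}a_\alpha = 1$. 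Iterating the identity $N$ times gives
\[
\widehat{\phi}(\omega) \;=\; \Bigl(\prod_{j=1}^N m(B^{-j}\omega)\Bigr)\,\widehat{\phi}(B^{-N}\omega),
\]
and since $\phi \in L^1(G)$ (being compactly supported in $L^2$), the Fourier transform $\widehat{\phi}$ is continuous; together with $B^{-N}\omega \to \theta$, letting $N\to\infty$ yields the infinite-product representation.

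For the support bound I would argue inductively on the refinement equation. Each $h_{[\alpha]}$ with $0\le\alpha\le p^n-1$ lies in $U_{-n}$, so $h_{[\alpha]}\oplus U_M = U_M$ whenever $M\le -n$. Starting from $\mathrm{supp}\,\phi\subset U_M$ for some $M$ sufficiently negative (possible by compact support), the refinement equation gives
\[
\mathrm{supp}\,\phi \;\subset\; \bigcup_{\alpha=0}^{p^n-1} A^{-1}(h_{[\alpha]}\oplus U_M) \;=\; A^{-1}U_M \;=\; U_{M+1}.
\]
Iterating raises $M$ one unit at a time up to $-n$, and one further application produces $\mathrm{supp}\,\phi\subset U_{1-n}$; no further contraction is possible because the family $\{h_{[\alpha]}:0\le\alpha\le p^n-1\}$ is not contained in $U_{1-n}$.

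The Strang-Fix condition and the partition of unity are Fourier duals on the compact quotient $G/H$: the $H$-periodic function $F(x) = \sum_{h\in H}\phi(x\oplus h)$ is well-defined pointwise (only finitely many translates contribute by compact support), and its Fourier coefficients on $H^\perp$ are $\widehat{F}(h^*) = \widehat{\phi}(h^*)$. Hence $F\equiv 1$ almost everywhere if and only if $\widehat{\phi}(h^*) = \delta_{h^*,\theta}$ on $H^\perp$; once constancy of $F$ is established, its value is pinned down by $\int_U F\,d\mu = \int_G\phi\,d\mu = \widehat{\phi}(\theta) = 1$. My plan is to exploit the $H^\perp$-periodicity of $m$ (since $\chi(h_{[\alpha]},h^*)=1$ for $h_{[\alpha]}\in H$ and $h^*\in H^\perp$, so $m\equiv 1$ on $H^\perp$) together with the multiplicative identity, iterating until $B^{-j}h^*$ enters a small $U_l^*$-neighbourhood of $\theta$ where the continuity of $\widehat{\phi}$ and the support localisation $\mathrm{supp}\,\phi\subset U_{1-n}$ can be brought to bear.

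\textbf{Main obstacle.} The delicate step is precisely this last one: the infinite-product formula on its own does not force $\widehat{\phi}(h^*)=0$ for $h^*\in H^\perp\setminus\{\theta\}$, because although the initial factors with $B^{-j}h^*\in H^\perp$ all equal $1$, once $B^{-j}h^*$ escapes $H^\perp$ the factors $m(B^{-j}h^*)$ are not prescribed by the hypotheses. The genuine resolution must combine the $L^2$-membership of $\phi$ (giving Plancherel control $\sum_{h^*\in H^\perp}|\widehat{\phi}(h^*)|^2 = \|F\|_{L^2(G/H)}^2 < \infty$) with the generalised-Walsh-polynomial structure imposed on $\widehat{\phi}$ by $\mathrm{supp}\,\phi\subset U_{1-n}$, so as to eliminate residual Fourier mass at nontrivial $h^*$; this is where the hypotheses must interact nontrivially, most naturally via a cascade-type argument on partial sums of translates or a direct functional equation for $F$.
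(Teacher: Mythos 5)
The paper does not actually prove this theorem: it is stated as imported background (Farkov's result, cf.\ \cite{farkov1,farkov}), so there is no internal proof to compare against and your proposal must stand on its own. On that basis, the first three conclusions are handled correctly: the transformed refinement relation $\widehat{\phi}(\omega)=m(B^{-1}\omega)\widehat{\phi}(B^{-1}\omega)$, the evaluation at $\theta$ giving $\sum_\alpha a_\alpha=1$, the telescoping to the infinite product (using that $\phi\in L^1(G)$ makes $\widehat{\phi}$ continuous and $B^{-N}\omega\to\theta$), and the support induction $U_M\to U_{M+1}$ terminating at $U_{1-n}$ are all sound.

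The genuine gap is the one you flag yourself: the modified Strang--Fix condition, and with it the partition of unity, is never established. Your downward iteration along $B^{-j}h^*$ is a dead end for exactly the reason you give --- once $B^{-j}h^*$ leaves $H^{\perp}$ the factors $m(B^{-j}h^*)$ are unconstrained, and this theorem has no QMF-type hypothesis $\sum_l|m(\omega\oplus\delta_l)|^2=1$ to force any of them to vanish; and your Plancherel bound $\sum_{h^*\in H^{\perp}}|\widehat{\phi}(h^*)|^2<\infty$ only gives decay along $H^{\perp}$, not vanishing. The missing idea is to iterate \emph{upward}. Since $m$ is $H^{\perp}$-periodic and $m(\theta)=\sum_\alpha a_\alpha=1$, the relation $\widehat{\phi}(B\omega)=m(\omega)\widehat{\phi}(\omega)$ together with $BH^{\perp}\subset H^{\perp}$ gives $\widehat{\phi}(B^k h^*)=\widehat{\phi}(h^*)$ for all $k\ge 0$ and $h^*\in H^{\perp}$. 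Moreover $\mathrm{supp}\,\phi\subset U_{1-n}$ forces $\widehat{\phi}$ to be constant on each coset of $U^*_{n-1}$ (for $x\in U_{1-n}$ the character $\chi(x,\omega)$ depends only on $\omega_i$, $i\le n-1$), and for $h^*\neq\theta$ the cosets $B^k h^*\oplus U^*_{n-1}$, $k\ge 0$, are pairwise disjoint, because the leading nonzero coordinate of $B^k h^*$ sits at index $j_0-k\le 0\le n-1$ and cannot be cancelled. Thus $|\widehat{\phi}|$ equals the constant $|\widehat{\phi}(h^*)|$ on a disjoint union of sets of equal positive measure, and $\widehat{\phi}\in L^2(G^*)$ forces $\widehat{\phi}(h^*)=0$; this is the same mechanism as Lemma \ref{21} of the paper, upgraded from an almost-everywhere statement to the specific points $h^*$ by the local constancy coming from compact support. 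Once this is in place, your Fourier-duality derivation of $\sum_{h\in H}\phi(x\oplus h)=1$ from $\widehat{\phi}(h^*)=\delta_{h^*,\theta}$ and $\widehat{\phi}(\theta)=1$ goes through.
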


	\begin{defi}
		A set $ M\subset U^* $ is said to be \textit{blocked} (for the mask $ m $) if it coincides with some union of the sets $ U_{n-1,s}^{*}$, $0\leq s\leq p^{n-1}-1 $, does not contain the set $ U_{n-1,0}^{*} $, and satisfies the condition
		$$
		T_{p}{M} \subset M \cup \lbrace \omega \in U^{*} : m(\omega)=0\rbrace,
		$$
		where
		$$
		T_pM=\bigcup_{l=0}^{p-1} \lbrace B^{-l}\omega_{[l]}+B^{-1}(\omega): \omega \in M \rbrace.
		$$
	\end{defi}

	\subsubsection{Multiresolution analysis}

	\begin{defi}
		A collection of closed subspaces $ V_{j} \subset L^{2}{(G)}, j \in \mathbb Z $, is called a \textit{Multiresolution analysis} (MRA) in $ L^{2}{(G)} $ if the following hold:
		\\
		(i) $ V_{j} \subset V_{j+1} $, for all $ j \in \mathbb Z $
		\\
		(ii) $ \overline{\cup_{j \in \mathbb Z} V_{j}}=L^{2}{(G)} $ and $ \cap_{j \in \mathbb Z} V_{j}=\lbrace 0\rbrace $
		\\
		(iii) $ f(\cdot) \in V_{j}\Leftrightarrow f(A\cdot) \in V_{j+1} $, for all $ j \in \mathbb Z $
		\\
		(iv) $ f(\cdot) \in V_{0}\Rightarrow f(\cdot \ominus h) \in V_{0} $, for all $ h \in H $
		\\
		(v) there is a function $ \phi \in L^{2}{(G)} $ such that the system $ \lbrace \phi(\cdot \ominus h)\vert h \in H\rbrace $ is an orthonormal basis of $ V_{0} $.
		
		The function $ \phi $ in condition (v) is called a \textit{scaling function} of the MRA $(V_j)_{j \in \mathbb Z}$.
	\end{defi}

	For $ \phi \in L^{2}{(G)} $,
	$$
	\phi_{j,h}(x)=p^{j/2}{\phi (A^{j}{x}\ominus h)}, \qquad j \in \mathbb Z, \;\; h \in H
	$$
	and the system $\lbrace \phi_{j,h} : h \in H\rbrace$ forms an orthonormal basis of $ V_{j} $, for every $ j \in \mathbb Z $.

	\begin{thm}
		A function $\phi \in L^{2}(G^{*})$ is a scaling function for an MRA of $L^{2}(G^{*})$ if and only if
		\begin{enumerate}
			\item $\sum_{h \in H^{\perp}}|\hat{\phi}(\omega \oplus h)|^2=1$, \quad \quad for a.e $\omega \in G^{*}$
			\item $lim_{j \rightarrow \infty}|\hat{\phi}(B^{-j}\omega)|=1$, \quad \quad  for a.e $\omega \in G^{*}$
			\item $\hat{\phi}(B\omega)=m(\omega)\hat{\phi}(\omega)$, \quad \quad  for a.e $\omega \in G^{*}$.
		\end{enumerate}
	\end{thm}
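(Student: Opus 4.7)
The plan is to establish the two directions separately, exploiting the Fourier-Walsh analysis on the Vilenkin group $G$ together with the action of the automorphism $A$ and its dual $B$. Throughout I would work on the Fourier side, using $U^{*}$ as a fundamental domain for the action of $H^{\perp}$ on $G^{*}$ and translating statements about $L^{2}(G)$ into statements about $H^{\perp}$-periodic functions on $G^{*}$.

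For necessity, I start from an MRA with scaling function $\phi$. Condition (1) is the orthonormality of the translates $\{\phi(\cdot\ominus h):h\in H\}$: applying Plancherel and the isomorphism $L^{2}(G)\cong L^{2}(\widehat{G})$, the inner product $\langle\phi(\cdot),\phi(\cdot\ominus h)\rangle$ becomes the Fourier coefficient of $\sum_{k\in H^{\perp}}|\hat{\phi}(\omega\oplus k)|^{2}$ against $\chi(h,\omega)$, so orthonormality is equivalent to this sum being identically $1$. Condition (3) follows from $V_{0}\subset V_{1}$: since $\{\phi_{1,h}:h\in H\}$ is an ONB of $V_{1}$, one writes $\phi=\sum a_{h}\phi_{1,h}$, takes Fourier transforms, and identifies $m(\omega)=p^{-1/2}\sum a_{h}\chi(h,\omega)$ as the $H^{\perp}$-periodic mask, yielding $\hat{\phi}(B\omega)=m(\omega)\hat{\phi}(\omega)$. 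Condition (2) requires the density $\overline{\cup V_{j}}=L^{2}(G)$: the orthogonal projection $P_{j}$ onto $V_{j}$ has the explicit Fourier expression $\widehat{P_{j}f}(\omega)=\hat{f}(\omega)\,|\hat{\phi}(B^{-j}\omega)|^{2}$ after a periodization trick, and $\|P_{j}f\|\to\|f\|$ forces $|\hat{\phi}(B^{-j}\omega)|\to 1$ a.e.; I would handle the technicalities by first establishing the result on a dense class of compactly (Fourier) supported functions.

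For sufficiency, I would construct the spaces explicitly: define $V_{0}$ as the closed linear span of $\{\phi(\cdot\ominus h):h\in H\}$, which by (1) is an ONB so (v) holds, and set $V_{j}=\{f(A^{j}\cdot):f\in V_{0}\}$, which automatically satisfies (iii) and (iv). The inclusion $V_{0}\subset V_{1}$ (hence (i)) follows by running the mask computation backwards from (3): the function $\phi$ lies in $V_{1}$ because $\hat{\phi}(B\omega)=m(\omega)\hat{\phi}(\omega)$ expresses $\phi$ as an $\ell^{2}$-combination of the ONB $\{\phi_{1,h}\}$ of $V_{1}$, using that $m$ is $H^{\perp}$-periodic and $\sum_{k}|m(\omega\oplus k)\hat{\phi}(\omega\oplus k)|^{2}=\sum_{k}|\hat{\phi}(B(\omega\oplus k))|^{2}$ is manageable via (1). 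The density part of (ii) follows from (2) by the standard $P_{j}f\to f$ argument on the dense subclass of functions with $\hat{f}$ compactly supported and bounded, where the dominated convergence theorem applies to $\int|\hat{f}(\omega)|^{2}|\hat{\phi}(B^{-j}\omega)|^{2}d\mu^{*}$; the trivial-intersection part $\cap V_{j}=\{0\}$ follows from the finiteness of $\|\phi\|_{2}$ together with condition (1), by showing that any $f\in V_{-j}$ has Fourier transform supported on sets of measure tending to zero.

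The main obstacle, in my view, is the density step in both directions, that is, the careful justification that the pointwise limit condition (2) is equivalent to $L^{2}$-convergence of the projections $P_{j}f$. One has to identify $P_{j}$ in Fourier variables in a way that is valid for all of $L^{2}(G)$ and not merely on the span of finite translates of $\phi$; this requires combining the periodization arising from (1) with the dilation law under $B$ and then passing to the limit via dominated convergence on a dense class before extending by a uniform bound on $\|P_{j}\|$. Once this is set up, the refinement equation (3) and the orthonormality (1) play purely algebraic roles and the proof assembles in a standard way.
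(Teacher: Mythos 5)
The paper does not actually prove this theorem: it is stated as a known characterization (imported from the local-field/Vilenkin-group literature, cf.\ Behera--Jahan and Farkov) and is used later only as a black box, so there is no in-paper argument to compare yours against. Judged on its own, your outline is the standard and essentially correct strategy: periodization for condition (1), the refinement relation $V_{0}\subset V_{1}$ for condition (3), and the convergence of the projections $P_{j}$ for condition (2), with the converse built by explicitly constructing the ladder $V_{j}$.

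Two points in your sketch need repair before it is a proof. First, the identity $\widehat{P_{j}f}(\omega)=\hat{f}(\omega)\,|\hat{\phi}(B^{-j}\omega)|^{2}$ is false as stated: $P_{j}$ is not a Fourier multiplier, since $\widehat{P_{j}f}(\omega)=\hat{\phi}(B^{-j}\omega)\sum_{h\in H^{\perp}}\hat{f}(\omega\oplus B^{j}h)\overline{\hat{\phi}(B^{-j}\omega\oplus h)}$ involves a periodization of $\hat{f}$. What is true, and what the argument actually uses, is the norm identity $\|P_{j}f\|^{2}=\int|\hat{f}(\omega)|^{2}|\hat{\phi}(B^{-j}\omega)|^{2}\,d\mu^{*}(\omega)$ valid for $j$ large once $\hat{f}$ is supported in a fixed compact set (so that the cross terms of the periodization vanish); you gesture at the dense class but should state the correct identity. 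Second, passing from $\int|\hat{f}|^{2}|\hat{\phi}(B^{-j}\cdot)|^{2}\to\int|\hat{f}|^{2}$ to the \emph{a.e.\ pointwise} limit in condition (2) requires knowing that the limit $\lim_{j}|\hat{\phi}(B^{-j}\omega)|$ exists a.e.; this comes from the monotonicity of $j\mapsto|\hat{\phi}(B^{-j}\omega)|$, which in turn follows from $|m|\le 1$ a.e., itself a consequence of (1) and (3) via $1\ge\sum_{h}|m(\omega\oplus h)|^{2}|\hat{\phi}(\omega\oplus h)|^{2}=|m(\omega)|^{2}$. Without this step the integral convergence only yields an $L^{2}$ or subsequence limit. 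With these two insertions, and the routine verification that $m\in L^{2}(U^{*})$ so that (3) really does exhibit $\phi$ as an $\ell^{2}$-combination of the $\phi_{1,h}$ in the sufficiency direction, your plan assembles into a complete proof.
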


	A function $ \phi $ is said to generate an MRA in $ L^{2}{(G)} $ if the system $ \lbrace \phi(\cdot \ominus h)\vert h \in H\rbrace $ is orthonormal in $ L^{2}{(G)} $ and, the family of subspaces
	$$
	V_{j}=\text{clos}_{L^{2}{(G)}}\text{span}\lbrace \phi_{j,h} : h \in H\rbrace, \qquad j \in \mathbb Z,
	$$
	is an MRA in $ L^{2}{(G)}$ with scaling function $\phi$. Farkov gave the following condition under which a compactly supported function $\phi \in L^2(G)$ generates an MRA in $L^2(G)$.

	\begin{thm}
		\textit{Suppose that the refinement equation possesses  a solution $\phi$ such that $\widehat{\phi}(\theta)=1$ and the corresponding mask $m$ satisfies the conditions
			$$
			m(\theta)=1 \qquad \Sigma_{l=0}^{p-1} \vert m(\omega \oplus \delta_{l}) \vert^2=1, \;\; \omega \in G^*,
			$$
			where $\delta_l$ is the sequence $\omega=(\omega_j)$ such that $\omega_1=l$ and $\omega_j=0$ for $j \neq 1$. Then the following are equivalent:\\
			(a) $\phi$ generates an MRA in $L^2(G)$. \\
			(b) $m$ satisfies the modified Cohen's condition.\\
			(c) $m$ has no blocked sets.}
		
	\end{thm}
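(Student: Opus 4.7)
I would establish the cyclic chain (a)$\Rightarrow$(b)$\Rightarrow$(c)$\Rightarrow$(a), and use as the central object the periodisation
$$
E(\omega) := \sum_{h \in H^{\perp}} |\widehat{\phi}(\omega \oplus h)|^{2}.
$$
The QMF-type identity $\sum_{l=0}^{p-1} |m(\omega \oplus \delta_l)|^2 = 1$ together with $\widehat\phi(B\omega)=m(\omega)\widehat\phi(\omega)$ gives a one-step recursion expressing $E(B\omega)$ as a weighted sum of $E$ on the cosets $\omega\oplus B^{-1}\delta_l$. Thus $E$ is a fixed point of a transfer-type operator, a fact I will repeatedly exploit.

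For (a)$\Rightarrow$(b), I would invoke the scaling-function characterization (the theorem stated just before): $\phi$ generates an MRA forces $E\equiv 1$ and $|\widehat\phi(B^{-j}\omega)|\to 1$ a.e. Picking $\varepsilon>0$ and iterating Egorov-type arguments I would produce a measurable set $K \subset G^*$ that is congruent to $U^*$ modulo $H^\perp$ and on which $|\widehat\phi(B^{-j}\omega)|\ge\varepsilon$ for every $j\ge 0$; this $K$ realises the modified Cohen condition.

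For (b)$\Rightarrow$(c), I would argue by contradiction. If $M$ is a blocked set, the condition $T_p M \subset M \cup \{m=0\}$ means that under the inverse dynamics $\omega\mapsto B^{-1}(\omega\oplus\omega_{[l]})$ the set $M$ is essentially $B$-invariant mod zeros of $m$. On a Cohen set $K$ one has $\widehat\phi\neq 0$, so the infinite product $\widehat\phi(\omega)=\prod_{j\ge 1} m(B^{-j}\omega)$ cannot acquire a zero factor along the orbit of any $\omega\in M \cap K$; but the blocked-set dynamics force such a zero after finitely many steps (otherwise the orbit stays in $M$, which by construction does not contain $U^*_{n-1,0}$ and therefore is disjoint from a neighbourhood of $\theta$, contradicting $m(\theta)=1$ propagated by the recursion).

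For (c)$\Rightarrow$(a), which I expect to be the main obstacle, I would assemble a blocked set out of a hypothetical failure of orthonormality. Using Theorem~1 one already has $\mathrm{supp}\,\phi\subset U_{1-n}$, the Strang--Fix condition, and partition of unity; these force $E$ to be constant on each cylinder $U^{*}_{n-1,s}$ and to equal $1$ on $U^{*}_{n-1,0}$ (via the value $\widehat\phi(\theta)=1$). If $E\not\equiv 1$, the set
$$
M := \bigcup\{U^{*}_{n-1,s} : E \text{ attains its minimum on } U^{*}_{n-1,s}\}
$$
is a finite union of the required cylinders, avoids $U^{*}_{n-1,0}$, and the recursion for $E$ together with minimality forces $T_p M \subset M \cup \{m=0\}$; thus $M$ is blocked, contradicting (c). Hence $E\equiv 1$, giving orthonormality of the translates of $\phi$, and the remaining MRA axioms then follow from the refinement equation, the inclusion of supports $U_{1-n}\subset U_{-n}$, and $|\widehat\phi(B^{-j}\omega)|\to 1$ through the scaling-function characterization.

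The delicate point throughout is the combinatorics of $T_p$ and the precise cylinder decomposition needed to turn the measure-theoretic bad set into a genuine blocked set; aligning the level $n$ coming from the mask with the cylinders $U^{*}_{n-1,s}$ and verifying the non-containment of $U^{*}_{n-1,0}$ via continuity of $\widehat\phi$ at $\theta$ is where the bookkeeping is heaviest.
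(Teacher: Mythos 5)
The paper does not prove this theorem: it is quoted verbatim as a result of Farkov (``Farkov gave the following condition\dots''), with the proof living in \cite{farkov1}/\cite{farkov}. So there is no in-paper argument to compare against; what follows is an assessment of your plan on its own terms. Your overall architecture --- the periodization $E(\omega)=\sum_{h\in H^\perp}|\widehat\phi(\omega\oplus h)|^2$, the transfer-operator recursion $E(B\omega)=\sum_l |m|^2E$ over the $p$ preimages, constancy of $E$ on the cylinders $U^*_{n-1,s}$ coming from $\mathrm{supp}\,\phi\subset U_{1-n}$, and the conversion of a failure of $E\equiv 1$ into a blocked set --- is exactly the standard Protasov--Farkov route, and the chain (a)$\Rightarrow$(b)$\Rightarrow$(c)$\Rightarrow$(a) is viable.

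Two places need more than bookkeeping. First, in (c)$\Rightarrow$(a) you take $M$ to be the union of cylinders where $E$ attains its \emph{minimum}. The equality case of $E(\omega)=\sum_\zeta|m(\zeta)|^2E(\zeta)\ge c\sum_\zeta|m(\zeta)|^2=c$ does give $T_pM\subset M\cup\{m=0\}$, but $M$ avoids $U^*_{n-1,0}$ only if the minimum is strictly below $E|_{U^*_{n-1,0}}=1$. Since you have not established $E\le 1$, it is conceivable that $\min E=1$ while $E>1$ elsewhere; you must either first prove $E\le1$ (standard from the QMF identity and the infinite-product formula, via monotonicity of the truncated periodizations) or run the dual argument on the maximum set in that case. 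Second, in (b)$\Rightarrow$(c) you speak of ``the orbit of any $\omega\in M\cap K$,'' but $M\subset U^*$ and a Cohen set $K$ is only congruent to $U^*$ modulo $H^\perp$, so $M\cap K$ may well be empty. The argument has to be run modulo $H^\perp$: take $\xi\in K$ congruent to a point of $M$, use $H^\perp$-periodicity of $m$ to identify $m(B^{-j}\xi)$ with $m$ evaluated at one of the $p$ branch points of $T_p$ applied to the class of $\xi$, and conclude that the class of $B^{-j}\xi$ stays in $M$ until it is forced into $U^*_{n-1,0}$ by $B^{-j}\xi\to\theta$ --- this is precisely why the blocked-set condition is phrased with $T_pM$ rather than with $B^{-1}M$, and your sketch currently elides it.
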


	Using the above characterization of refinable function and the matrix extension method Farkov gave a procedure for construction of orthonormal wavelets $ \psi_{1},...,\psi_{p-1} $ such that the functions
	$$
	\psi_{l,j,h}(x)= p^{j/2}{\psi_{l}{(A^{j}x\ominus h)}}, \qquad 1\leq l\leq p-1, j \in \mathbb Z, \; \; h \in H,
	$$
	form an orthonormal basis of $ L^{2}{(G)} $.



	
    \section{Characterization the pseudo-scaling functions}
    In this section, We establish the notion of pseudo scaling function and its impact on generalized filters.
    
    For multiwavelet $\Psi:=\{\psi_{1},\psi_{2},...,\psi_{p-1}\}\subset L^{2}(G)$, The affine system $\mathcal{A}(\Psi)$ is defined by
    \begin{equation}\label{11}
    \mathcal{A}(\Psi)=\{\psi_{j,h}^{l}(x)|\psi_{j,h}^{l}(x)=p^{j/2}\psi^{l}(A^{j}x - h):j \in \mathbb{Z}, h \in H, l=1,2,...,p-1\}.
    \end{equation}
    
    The following are the two definitions of multiwavelet frame and the multiwavelet Parseval frame.
    
    \begin{defi}
    	The affine system $\mathcal{A}\subset L^{2}(G)$ is said to be multiwavelet frame if the system \eqref{11} is a frame for $L^{2}(G)$.
    \end{defi} 
    
     \begin{defi}
     	The affine system $\mathcal{A}\subset L^{2}(G)$ is said to be multiwavelet Parseval frame if the system \eqref{11} is a Parseval frame for $L^{2}(G)$.
      \end{defi}

    The following theorem is the characterization of Parseval frame for Vilenkin group, which is one of the particular case of local fields with positive characteristics. 
    
    \begin{thm}
    	Suppose $\Psi=\{\psi_{1},\psi_{2},...,\psi_{p-1}\} \subset L^{2}(G)$. Then the affine system $A(\Psi)$ is a Parseval frame for $L^{2}(G)$ if and only if for a.e. $\omega$, the following holds:
    	\begin{itemize}
    		\item [1.]$\sum_{i=1}^{p-1}\sum_{j \in \mathbb{Z}}|\hat{\psi}_{i}(B^{j}\omega)|^{2}=1$
    		\item [2.] $\sum_{i=1}^{p-1}\sum_{j=0}^{\infty}\hat{\psi}_{i}(B^{j}\omega)\overline{\hat{\psi}_{i}(B^{j}(\omega+\gamma))}=0$, for $\gamma \in H^{\perp}\setminus BH^{\perp}$.
    	\end{itemize} 
    \end{thm}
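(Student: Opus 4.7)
The plan is to execute the Calder\'on-type computation standard in the characterization of Parseval wavelet frames on $\mathbb{R}^n$ (as in Rzeszotnik--Weiss and Han), adapted to the Vilenkin setting using Fourier analysis on $G$ and periodization over the discrete subgroup $H$. The strategy is to transfer the Parseval identity $\|f\|^2 = \sum_{l,j,h}|\langle f, \psi^l_{j,h}\rangle|^2$ to the Fourier side, split it into a diagonal contribution (which forces condition 1) and a cross-term remainder (which forces condition 2), and then extract the pointwise equalities by specializing $f$ to a dense family of test functions.

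First I would derive the dilation--translation identity
$$\widehat{\psi^l_{j,h}}(\omega) = p^{-j/2}\,\overline{\chi(h, B^{-j}\omega)}\,\hat{\psi}^l(B^{-j}\omega),$$
so that Plancherel gives $\langle f, \psi^l_{j,h}\rangle = p^{j/2}\int_{G^*}\hat f(B^j\xi)\overline{\hat\psi^l(\xi)}\chi(h,\xi)\,d\mu^*(\xi)$. For fixed $l$ and $j$, I would sum $|\langle f, \psi^l_{j,h}\rangle|^2$ over $h \in H$ by periodizing the integrand over $H^\perp$ and applying Parseval's identity for the orthonormal basis $\{W_\alpha^*\}$ of $L^2(U^*)$; expanding the squared periodization and collapsing one of the $H^\perp$-sums back into an integral over $G^*$ yields
$$\sum_{h \in H}|\langle f, \psi^l_{j,h}\rangle|^2 = \sum_{\tau \in H^\perp}\int_{G^*}\hat f(\omega)\overline{\hat f(\omega + B^j\tau)}\,\overline{\hat\psi^l(B^{-j}\omega)}\,\hat\psi^l(B^{-j}\omega + \tau)\,d\mu^*(\omega).$$
Summing over $l$ and $j$, isolating the $\tau = 0$ contribution, and relabeling $j \mapsto -j$ in the diagonal sum converts the Parseval identity into
$$\sum_{l,j,h}|\langle f, \psi^l_{j,h}\rangle|^2 = \int_{G^*}|\hat f(\omega)|^2\Big[\sum_{l=1}^{p-1}\sum_{j\in\mathbb Z}|\hat\psi^l(B^j\omega)|^2\Big]d\mu^*(\omega) + R(f),$$
where $R(f)$ collects all $\tau\ne 0$ contributions.

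Next I would reorganize $R(f)$ into the orbit form required by condition 2. Each nonzero $\tau \in H^\perp$ decomposes uniquely as $\tau = B^n q$ with $q \in H^\perp \setminus BH^\perp$ and $n \geq 0$; substituting $m = j + n$ and $\omega = B^m\xi$ converts $R(f)$ into
$$R(f) = \sum_{q \in H^\perp \setminus BH^\perp}\sum_{m \in \mathbb Z}p^m\int_{G^*}\hat f(B^m\xi)\overline{\hat f(B^m(\xi + q))}\,\overline{t_q(\xi)}\,d\mu^*(\xi),$$
where $t_q(\xi) = \sum_{l=1}^{p-1}\sum_{n\geq 0}\hat\psi^l(B^n\xi)\overline{\hat\psi^l(B^n(\xi+q))}$ is exactly the expression appearing in condition 2. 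Sufficiency is then immediate: condition 1 forces the diagonal term to equal $\|f\|^2$, while condition 2 forces $R(f)\equiv 0$. For necessity I would polarize the Parseval identity to obtain an analogous formula for $\langle f, g\rangle$ and test it against pairs $(f,g)$ whose Fourier transforms are bounded and supported in small compact subsets of $G^*\setminus\{\theta\}$: condition 1 is extracted by choosing supports so that no $H^\perp$-shifts of $\hat f$ overlap, and condition 2 for each fixed $q$ is extracted by choosing supports so that only the $m=0$ term in the $q$-sum contributes. The principal technical obstacle is that the $\sum_{j,\tau}$ double series is only conditionally convergent in general, so the sum--integral interchanges above must first be carried out on the dense subspace $\mathcal D = \{f \in L^2(G) : \hat f \in L^\infty,\ \mathrm{supp}\,\hat f\ \text{compact in}\ G^*\setminus\{\theta\}\}$, where only finitely many $(j,\tau)$ contribute, and then extended to all of $L^2(G)$ by density.
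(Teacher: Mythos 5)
The paper itself offers no proof of this theorem: it states it and defers to Behera and Jahan \cite{behera}, where the result is proved for local fields of positive characteristic by exactly the Calder\'on-type computation you outline (periodize the coefficient sum over $H$, split off the $\tau=\theta$ diagonal, reorganize the off-diagonal terms by writing each nonzero $\tau\in H^{\perp}$ uniquely as $B^{n}q$ with $q\in H^{\perp}\setminus BH^{\perp}$ and $n\geq 0$, then test against a dense class and polarize). So your architecture is the right one and coincides with the cited source's. The one step you state incorrectly is the claim that for $\hat f$ bounded with compact support $K\subset G^{*}\setminus\{\theta\}$ only finitely many pairs $(j,\tau)$ contribute: for each fixed $j$ the number of contributing $\tau\in H^{\perp}\setminus\{\theta\}$ is indeed finite, but the constraint $B^{j}\tau\in K\ominus K$ admits on the order of $p^{-j}\mu^{*}(K\ominus K)$ lattice points, so the count grows without bound as $j\to-\infty$ and the total index set is infinite. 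The interchange of sums and integrals must therefore be justified by an absolute-convergence estimate rather than by finiteness --- Cauchy--Schwarz applied to the $\tau$-sum reduces the off-diagonal part to the diagonal quantity $\sum_{j}\int_{K}|\hat{\psi}_{i}(B^{-j}\omega)|^{2}\,d\mu^{*}(\omega)$, which is finite under condition 1 in the sufficiency direction and under the Bessel bound implied by the frame hypothesis in the necessity direction. With that repair your sketch is the standard proof.
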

    The proof of this theorem are in \cite{behera}.
    \begin{defi} 	
    	Let  $M=\{m_{0},m_{1},...,m_{p-1}\} \subset L^{\infty}(U)$ be a $H-$ periodic function, is called a generalized filter if it satisfies the following equation
    \begin{equation}\label{12}
    \sum_{i=0}^{p-1}|m_{i}(\omega)|^{2}=1, \text{ a.e } \omega,
    \end{equation}	
    \begin{equation}\label{13}
    m_{0}(\omega)\overline{m_{0}(\omega +\beta)}-(\sum_{i=1}^{p-1}m_{i}(\omega)\overline{m_{i}(\omega + \beta)})=0 \text{ a.e }\omega, \text{ where } \beta=B^{-1}\gamma \text{ and }\gamma \text{  as in Theorem 2.3 }.
    \end{equation} 	
    \end{defi}
     We define the set of generalized filters is denoted by $\tilde{F}$ and let $\tilde{F}^{+}=\{M \in \tilde{F}:m_{0} \geq 0, m_{0} \in M\}$. Notice that for $M \in \tilde{F}$,   $M_{|m_{0}|}=\{|m_{0}|,m_{1},...,m_{p-1}\} \in \tilde{F}^{+}$.
   
   \begin{defi}
   	A function $\phi \in L^{2}(G)$ is called a pseudo-scaling function if there exists a generalized filter $M=\{m_{0},m_{1},...,m_{p-1}\} \in \tilde{F}$ such that 
   	\begin{equation}\label{14}
   	\hat{\phi}(B\omega)=m_{0}(\omega)\hat{\phi}(\omega), \text{ a.e } \omega.
   	\end{equation}
   	Here we observed that $M$ is not uniquely determined by the pseudo-scaling function $\phi$. Therefore, we consider the set of all $M \in \tilde{F}$ such that 
   	$M$ satisfies \eqref{14} for $\phi$, and is denoted by $\tilde{F}_{\phi}$. In particular, if $\phi=0$, then $\tilde{F}_{\phi}=\tilde{F}$.
   	
   	If $M \in \tilde{F}^{+}$, then 
   	$$\hat{\phi}_{m_{0}}(\omega)=\prod_{j=1}^{\infty}m_{0}(B^{-j}\omega) \quad \text{ is well defined a.e. },$$
   	since $0 \leq m_{0}(\omega) \leq 1$, a.e. $\omega$.
   	Furthermore, we get
   	\begin{equation}\label{15}
   		\hat{\phi}_{m_{0}}(B\omega)=m_{0}(\omega)\hat{\phi}_{m_{0}}(\omega), \text{ a.e } \omega.
   	\end{equation}
   \end{defi} 
   \begin{defi}
   	Suppose that $M=\{m_{0},m_{1},...,m_{p-1}\} \in \tilde{F}_{\phi_{m_{0}}}$.  Let us define 
   		\begin{equation}\label{16}
   		N_{0}(m_{0})=\{\omega \in G^{*}: lim_{j \rightarrow \infty}\hat{\phi}_{m_{0}}(B^{-j}\omega)=0\}.
   		\end{equation}
   	  If $|N_{0}(|m_{0}|)|=0$, then $M$is called a generalized low-pass filter. 
   	The set denoted by $\tilde{F}_{0}$ is the set of all generalized filters satisfying equation \eqref{16}.			
   \end{defi} 
   
    \begin{lem}\label{21}
    	If $f \in L^{2}(G^{*})$, then $lim_{j \rightarrow \infty}|f(B^{j}\omega)|=0$,  for a.e $\omega \in G^{*}$. 
    \end{lem}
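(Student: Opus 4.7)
My plan is to prove the lemma by the standard change-of-variables plus Borel--Cantelli argument. The key point is that the automorphism $B$ expands the Haar measure $\mu^{*}$ on $G^{*}$ by a factor of $p$, i.e., $\mu^{*}(B E) = p\,\mu^{*}(E)$ for every Borel set $E$. This follows from the construction of $B$ as the dual of $A$ (which maps $U$ to $U_{-1}$, of measure $p$), and it translates into the change-of-variables identity
\begin{equation*}
\int_{G^{*}} g(B\omega)\,d\mu^{*}(\omega) = p^{-1}\int_{G^{*}} g(\omega)\,d\mu^{*}(\omega)
\end{equation*}
for every nonnegative measurable $g$. I would recall this first, either from the definition of the Haar measure or by verifying it on the generating neighborhoods $U_{l}^{*}$.

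Next I would apply this identity iteratively to the function $g=|f|^{2}$. For each $j\geq 1$,
\begin{equation*}
\int_{G^{*}} |f(B^{j}\omega)|^{2}\,d\mu^{*}(\omega) = p^{-j}\,\|f\|_{L^{2}(G^{*})}^{2}.
\end{equation*}
Summing over $j\geq 1$ and invoking Tonelli yields
\begin{equation*}
\int_{G^{*}} \sum_{j=1}^{\infty} |f(B^{j}\omega)|^{2}\,d\mu^{*}(\omega) = \|f\|_{L^{2}(G^{*})}^{2} \sum_{j=1}^{\infty} p^{-j} = \frac{\|f\|_{L^{2}(G^{*})}^{2}}{p-1} < \infty.
\end{equation*}

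From this finiteness I conclude that $\sum_{j=1}^{\infty} |f(B^{j}\omega)|^{2} < \infty$ for $\mu^{*}$-a.e.\ $\omega$, and therefore the general term must tend to zero, giving $\lim_{j\to\infty}|f(B^{j}\omega)|=0$ a.e., as claimed.

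There is no real obstacle here; the only step that needs a touch of care is the measure-theoretic fact $\mu^{*}(BE)=p\,\mu^{*}(E)$, since this is what converts the expanding action of $B$ into the decaying $L^{2}$-mass of the dilates $f\circ B^{j}$. Once that identity is in hand, the remainder is just Tonelli and the divergence-test for series.
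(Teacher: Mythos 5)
Your proof is correct and is essentially identical to the paper's: both integrate $\sum_{j\ge 1}|f(B^{j}\omega)|^{2}$, interchange sum and integral by monotone convergence/Tonelli, use the scaling $\int|f(B^{j}\omega)|^{2}d\mu^{*}(\omega)=p^{-j}\|f\|^{2}$ to get the bound $\|f\|^{2}/(p-1)$, and conclude a.e.\ convergence of the series, hence the limit is zero. Your extra remark justifying $\mu^{*}(BE)=p\,\mu^{*}(E)$ is a welcome addition, as the paper uses this identity without comment.
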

 \begin{proof}
  Let us assume that $f \in L^{2}(G^{*})$ and by applying the monotone convergence theorem we get
  \begin{align*}
  \int_{G^{*}}\sum_{j \in \mathbb{Z}^{+}}|f(B^{j}\omega)|^{2}d\omega &= \sum_{j \in \mathbb{Z}^{+}}\int_{G^{*}}|f(B^{j}\omega)|^{2}d\omega\\
  &=\sum_{j \in \mathbb{Z}^{+}}p^{-j}\int_{G^{*}}|f(\omega)|^{2}d\omega\\
  &=\frac{1}{p-1}||f||^{2} <\infty.
  \end{align*}
   That implies $\sum_{j \in \mathbb{Z}^{+}}|f(B^{j}\omega)|^{2}$ is finite, for $\omega \in G^{*}$ a.e. Thus, for a.e. $\omega \in G^{*}$, $lim_{j \rightarrow \infty}|f(B^{j}\omega)|=0$. 	
  \end{proof}  
       
  \begin{defi}
  	A Parseval frame  multiwavelet(PFMW) $\Psi=\{\psi_{1},\psi_{2},...,\psi_{p-1}\}$ is called an MRA PFMW if there exists a pseudo-scaling function $\phi$, $M \in \tilde{F}_{\phi}$ and unimodular functions $s_{i}\in L^{2}(G)$, $1 \leq i \leq p-1$ such that
  	\begin{equation}\label{17}
  	\hat{\psi}_{i}(B\omega)=W_{\alpha}(\omega)s_{i}(B\omega)\overline{M(\omega)}\hat{\phi}(\omega), \text{ a.e } \omega.
  	\end{equation}	
  \end{defi}  
 The following theorem gives a characterization of the generalized low pass filter.
 \begin{thm}
 	Suppose $\Psi=\{\psi_{1},\psi_{2},...,\psi_{p-1}\}$ is an MRA PFMW and $\phi$ is a pseudo-scaling function satisfying \eqref{14}. If M is defined by \eqref{17},
 	then $M \in \tilde{F}_{0}$.
 \end{thm}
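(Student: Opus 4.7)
The plan is to combine the Parseval frame characterization (item 1 of Theorem 2.3) with the MRA structural equation \eqref{17} in order to derive a telescoping identity for $|\hat{\phi}(B^{j}\omega)|^{2}$, and then read the non-vanishing of $\hat{\phi}_{|m_{0}|}$ off of it. Throughout I interpret $\overline{M(\omega)}$ in \eqref{17} as $\overline{m_{i}(\omega)}$ on the $i$-th component, which is the only reading consistent with the filter equations \eqref{12}--\eqref{13}.

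First I would use \eqref{17} together with $|s_{i}|\equiv 1$ and $|W_{\alpha}|\equiv 1$; after the shift $\omega\mapsto B^{j-1}\omega$ this yields $|\hat{\psi}_{i}(B^{j}\omega)|^{2}=|m_{i}(B^{j-1}\omega)|^{2}|\hat{\phi}(B^{j-1}\omega)|^{2}$ for every $j\in\mathbb{Z}$. Summing over $i=1,\dots,p-1$, applying the filter identity \eqref{12} in the form $\sum_{i=1}^{p-1}|m_{i}|^{2}=1-|m_{0}|^{2}$, and finally invoking the pseudo-scaling relation \eqref{14} collapses the $i$-sum to the telescoping expression
$$
\sum_{i=1}^{p-1}|\hat{\psi}_{i}(B^{j}\omega)|^{2}=|\hat{\phi}(B^{j-1}\omega)|^{2}-|\hat{\phi}(B^{j}\omega)|^{2}.
$$

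Next, summing over $j\in\mathbb{Z}$ and invoking item 1 of Theorem 2.3, the two-sided partial sums of these non-negative terms telescope (monotone convergence) to
$$
1=\lim_{k\to\infty}|\hat{\phi}(B^{-k}\omega)|^{2}-\lim_{k\to\infty}|\hat{\phi}(B^{k}\omega)|^{2}\qquad\text{a.e. }\omega.
$$
Lemma \ref{21} applied to $\hat{\phi}\in L^{2}(G^{*})$ forces the second limit to vanish, so $|\hat{\phi}(B^{-k}\omega)|\to 1$ a.e. Iterating \eqref{14} backwards gives $|\hat{\phi}(\omega)|=\prod_{l=1}^{J}|m_{0}(B^{-l}\omega)|\cdot|\hat{\phi}(B^{-J}\omega)|$; letting $J\to\infty$, the decreasing partial products tend to $\hat{\phi}_{|m_{0}|}(\omega)$ by monotone convergence while the trailing factor tends to $1$, so $|\hat{\phi}(\omega)|=\hat{\phi}_{|m_{0}|}(\omega)$ a.e. Replacing $\omega$ by $B^{-J}\omega$ (which preserves null sets since $B$ is a measurable automorphism) and letting $J\to\infty$ then produces $\hat{\phi}_{|m_{0}|}(B^{-J}\omega)\to 1$ a.e., hence $|N_{0}(|m_{0}|)|=0$, which is precisely the definition of $M\in\tilde{F}_{0}$.

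The main obstacle is the careful bookkeeping around the two-sided sum over $\mathbb{Z}$ and the chain of almost-everywhere identities: the Parseval expansion, Lemma \ref{21}, the iterated pseudo-scaling, and the translation by $B^{-J}$ each hold only off a null set, and one has to observe that these countably many null sets combine into a single null set outside of which every step of the argument is valid. A secondary nuisance is the notational ambiguity of $\overline{M(\omega)}$ in \eqref{17}, which must be read componentwise as $\overline{m_{i}(\omega)}$ for the filter algebra \eqref{12} to do its job.
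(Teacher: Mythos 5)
Your proposal is correct and follows essentially the same route as the paper: apply condition 1 of Theorem 2.3 to the system \eqref{17}, use \eqref{12} and \eqref{14} to telescope the double sum to $\lim_{n}\{|\hat{\phi}(B^{-n-1}\omega)|^{2}-|\hat{\phi}(B^{n}\omega)|^{2}\}=1$, kill the second limit with Lemma \ref{21}, and then iterate \eqref{14} to identify $|\hat{\phi}|$ with $\hat{\phi}_{|m_{0}|}$ and conclude $|N_{0}(|m_{0}|)|=0$. Your write-up is in fact somewhat more careful than the paper's at the final step (the explicit substitution $\omega\mapsto B^{-J}\omega$ and the accounting of the countably many null sets), but the argument is the same.
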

 \begin{proof}
 	Notice that $\Psi$ is an MRA PFMW, by Theorem 2.3, \eqref{12}, \eqref{13} and \eqref{17}, we have 
 	\begin{align*}
 	1 &= \sum_{i=1}^{p-1}\sum_{j \in \mathbb{Z}}|\hat{\psi}_{i}(B^{j}\omega)|^{2}\\
 	&=\sum_{i=1}^{p-1}\sum_{j \in \mathbb{Z}}|W_{\alpha}(\omega)s_{i}(B^{j}\omega)\overline{M(B^{j-1}\omega)}|^{2}|\hat{\phi}(B^{j-1}\omega)|^{2}\\
 	&=\sum_{i=1}^{p-1}\sum_{j \in \mathbb{Z}}|m_{i}(B^{j-1}\omega)|^{2}|\hat{\phi}(B^{j-1}\omega)|^{2}\\
 	&=lim_{n \rightarrow \infty}\sum_{j=-n}^{n}(\sum_{i=1}^{p-1}|m_{i}(B^{j-1}\omega)|^{2})|\hat{\phi}(B^{j-1}\omega)|^{2}\\
 	&=lim_{n \rightarrow \infty}\sum_{j=-n}^{n}(1-|m_{0}(B^{j-1}\omega)|^{2})|\hat{\phi}(B^{j-1}\omega)|^{2}\\
 	&=lim_{n \rightarrow \infty}\{|\hat{\phi}(B^{-n-1}\omega)|^{2}-|\hat{\phi}(B^{n}\omega)|^{2}\}.
 	\end{align*}
 	By using the Lemma $\eqref{21}$ for  $\phi \in L^{2}(G)$, we get  $lim_{n \rightarrow \infty}|\hat{\phi}(B^{n}\omega)|^{2}=0$ for a.e. $\omega$.
 	Therefore, $lim_{n \rightarrow \infty}|\hat{\phi}(B^{-n-1}\omega)|^{2}=1$ holds for a.e $\omega$.
 	From \eqref{14}, we have 
 	$$|\hat{\phi}(\omega)|=|\prod_{j=1}^{n}M(B^{-j}\omega)||\hat{\phi}(B^{-j}\omega)|, \text{ a.e. } \omega.$$ 
 	Using \eqref{15}, It is obtained that $|\hat{\phi}(\omega)|=\hat{\phi}_{|M|}$ and $|N_{0}(|M|)|=0$
 	is clearly satisfied. Thus, by Definition 2.5, we have $M \in \tilde{F}_{0}$. 
 \end{proof}
\begin{lem}\label{22}
	Let $\mu$ be a $H-$ periodic, unimodular function. Then there exists a unimodular function $v$ that satisfies 
	\begin{equation}\label{18}
	\mu(\omega)=v(B\omega)\overline{v(\omega)},  \quad \text{ a.e. }\omega.
	\end{equation}	
\end{lem}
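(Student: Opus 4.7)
The plan is to build $v$ by a cocycle-type construction: prescribe it on a Borel fundamental domain for the map $\omega \mapsto B\omega$ acting on $G^*$, and then extend by iterating the desired identity $v(B\omega) = \mu(\omega)v(\omega)$.

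First, I would take as fundamental domain $F := U_0^* \setminus U_1^* = U^* \setminus B^{-1}U^*$. Because $B$ scales $\mu^*$-measure by the factor $p$, one has the strict inclusion $B^{-1}U_0^* = U_1^* \subsetneq U_0^*$; combined with $\bigcap_{n \geq 0}U_n^* = \{\theta\}$ and $\bigcup_{n \geq 0}U_{-n}^* = G^*$, a short check gives the Borel partition
\[
G^* \setminus \{\theta\} = \bigsqcup_{n \in \mathbb{Z}} B^n F, \qquad B^n F = U_{-n}^* \setminus U_{1-n}^*.
\]
Hence every $\omega \neq \theta$ has a unique representation $\omega = B^{n(\omega)}\omega_0(\omega)$ with $n(\omega) \in \mathbb{Z}$ and $\omega_0(\omega) \in F$, and the map $\omega \mapsto (n(\omega),\omega_0(\omega))$ is Borel.

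Next, I would set $v \equiv 1$ on $F$ and extend by the telescoping formulas
\[
v(B^n\omega_0) := \prod_{k=0}^{n-1}\mu(B^k\omega_0) \text{ for } n \geq 1, \qquad v(B^{-n}\omega_0) := \prod_{k=1}^{n}\overline{\mu(B^{-k}\omega_0)} \text{ for } n \geq 1,
\]
for each $\omega_0 \in F$, with $v(\theta)$ chosen arbitrarily. Because each factor sits on the unit circle and the product is finite, $|v|=1$ a.e., and $v$ is Borel measurable by the previous step.

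Finally, I would verify the identity $\mu(\omega)=v(B\omega)\overline{v(\omega)}$ by a brief case split on the sign of $n(\omega)$. Passing from $\omega$ to $B\omega$ shifts the exponent by one, so the product defining $v(B\omega)$ differs from that defining $v(\omega)$ precisely by the factor $\mu(\omega)$; at the branch boundary $n=0$, cancelling a $\overline{\mu}$ with a $\mu$ via $|\mu|=1$ closes the telescoping. I expect this last bookkeeping step to be the only slightly delicate point, but it is purely formal --- no analytic input beyond the expansivity of $B$ is required, and in fact the $H$-periodicity hypothesis on $\mu$ plays no role in the existence argument.
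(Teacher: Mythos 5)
The paper states this lemma without any proof, so there is nothing to compare your argument against; judged on its own, your construction is correct and essentially complete. The key facts you need all hold in this setting: since $(B\omega)_j=\omega_{j+1}$ one has $BU_l^*=U_{l-1}^*$, hence $B^{-1}U^*=U_1^*\subsetneq U^*$ (this is immediate from the definitions --- you do not actually need the measure-scaling to see it), and together with $\bigcap_{n\ge 0}U_n^*=\{\theta\}$ and $\bigcup_{n\ge 0}U_{-n}^*=G^*$ this gives the Borel partition $G^*\setminus\{\theta\}=\bigsqcup_{n\in\mathbb{Z}}B^nF$ with $F=U^*\setminus U_1^*$, exactly as you claim. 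The telescoping verification goes through in all three cases ($n\ge 1$, $n=0$, $n\le -1$), and measurability is clear since $v$ restricted to each $B^nF$ is a finite product of Borel functions. Two small points worth making explicit: the ``a.e.''\ qualifier survives the construction because $B$ and $B^{-1}$ carry null sets to null sets, so the countably many images $B^k N$ of the exceptional set $N$ of $\mu$ still form a null set; and your remark that periodicity is not needed is accurate but should be phrased carefully --- the periodicity (with respect to $H^{\perp}$, which is surely what the paper intends by ``$H$-periodic'' on the dual side) is what lets one regard $\mu$, a priori a filter defined on $U^*$, as a function defined a.e.\ on all of $G^*$, which your construction does require. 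This is precisely the Vilenkin-group analogue of the classical fundamental-domain/cocycle argument for dyadic dilation on $\mathbb{R}$ (using $[-2,-1)\cup[1,2)$), and it is the natural way to prove the lemma here.
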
 
 
 \begin{thm}
 	Let $M \in \tilde{F}_{0}$ be a generalizedfilter. Then there exist a pseudo-scaling function and an MRA PFMW $\Psi=\{\psi_{1},\psi_{2},...,\psi_{p-1}\}$ such that they satisfy \eqref{17}.
 \end{thm}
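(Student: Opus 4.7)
The plan is to construct $\hat\phi$ and the $\hat\psi_i$'s explicitly from the filter data and then verify the Parseval frame multiwavelet characterization of Theorem 2.3. Throughout I would use the filter identities \eqref{12}--\eqref{13}, Lemma \ref{21}, and Lemma \ref{22} as black boxes.

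\textbf{Step 1 (the pseudo-scaling function).} I would first form the non-negative infinite product $\hat\phi_{|m_0|}(\omega)=\prod_{j=1}^\infty |m_0(B^{-j}\omega)|$, which converges a.e.\ since $0\le|m_0|\le 1$ by \eqref{12}. Next I would show $\phi_{|m_0|}\in L^2(G)$ by the standard truncation argument: set $\hat\phi_N(\omega)=\mathbf{1}_{B^N U^*}(\omega)\prod_{j=1}^N|m_0(B^{-j}\omega)|$, use the QMF-type consequence of \eqref{12} together with the change of variable $\omega\mapsto B\omega$ to obtain $\|\hat\phi_N\|_2\le 1$, and apply Fatou. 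To upgrade $\hat\phi_{|m_0|}$ to a solution of \eqref{14} for the complex filter, I would let $\mu(\omega)=m_0(\omega)/|m_0(\omega)|$ on $\{m_0\neq 0\}$, extended to a unimodular $H$-periodic function, invoke Lemma \ref{22} to produce a unimodular $v$ with $\mu(\omega)=v(B\omega)\overline{v(\omega)}$, and set $\hat\phi(\omega)=v(\omega)\hat\phi_{|m_0|}(\omega)$. A direct computation then gives $\hat\phi(B\omega)=m_0(\omega)\hat\phi(\omega)$ a.e., so $\phi$ is a pseudo-scaling function with $M\in\tilde F_\phi$.

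\textbf{Step 2 (the multiwavelet and condition (1) of Theorem 2.3).} I would choose $H$-periodic unimodular functions $s_1,\dots,s_{p-1}$ (produced via Lemma \ref{22} whenever a phase adjustment is needed) and define
\[
\hat\psi_i(B\omega)=W_\alpha(\omega)\,s_i(B\omega)\,\overline{m_i(\omega)}\,\hat\phi(\omega),\qquad 1\le i\le p-1.
\]
Using $|s_i|=|W_\alpha|=1$ together with $\sum_{i=1}^{p-1}|m_i|^2=1-|m_0|^2$ from \eqref{12} and $|\hat\phi(B\omega)|=|m_0(\omega)||\hat\phi(\omega)|$ from Step 1, one gets
\[
\sum_{i=1}^{p-1}|\hat\psi_i(B\omega)|^2=|\hat\phi(\omega)|^2-|\hat\phi(B\omega)|^2,
\]
so summation over $j\in\mathbb{Z}$ telescopes. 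The upper tail vanishes by Lemma \ref{21} applied to $\phi\in L^2(G)$, while the lower tail tends to $1$ because the assumption $M\in\tilde F_0$ forces $\lim_{n\to\infty}|\hat\phi(B^{-n}\omega)|=1$ a.e.\ (the tail of the infinite product defining $\hat\phi_{|m_0|}$ tends to $1$ exactly off $N_0(|m_0|)$).

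\textbf{Step 3 (cross-orthogonality, and the main obstacle).} Condition (2) of Theorem 2.3 requires $\sum_i\sum_{j\ge 0}\hat\psi_i(B^j\omega)\overline{\hat\psi_i(B^j(\omega+\gamma))}=0$ for $\gamma\in H^\perp\setminus BH^\perp$. I would substitute the formula for $\hat\psi_i$, use $H$-periodicity of $m_i$ and $s_i$ to collapse the inner $i$-sum via the filter orthogonality identity \eqref{13} with $\beta=B^{-1}\gamma$, and let the resulting differences of $\hat\phi(B^{j-1}\omega)\overline{\hat\phi(B^{j-1}(\omega+\gamma))}$ telescope as in Step 2. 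The delicate point, and the one I expect to be the main obstacle, is tracking carefully when $B^{j-1}\gamma$ sits inside $H^\perp$ (so that periodicity of the $m_i$ and $s_i$ applies) versus when it does not, and arranging the phases $W_\alpha$ and $s_i$ so that every level's cross term both collapses under \eqref{13} and contributes correctly to the telescoping cancellation. Establishing $\|\phi\|_2<\infty$ from the truncated products in Step 1 is also non-routine, since the majorant must be arranged with respect to the $U^*_n$-filtration of $G^*$ rather than the Euclidean dyadic one.
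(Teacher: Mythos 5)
Your proposal follows essentially the same route as the paper's proof: the signum factorization $m_{0}=\mu|m_{0}|$, Lemma \ref{22} to produce the unimodular $v$, the definition $\hat{\phi}=v\hat{\phi}_{|m_{0}|}$, the choice of $\hat{\psi}_{i}$ via \eqref{17}, and verification of both conditions of Theorem 2.3 by telescoping, with the $j=0$ term of condition (2) handled via \eqref{13} and the $j\geq 1$ terms via \eqref{12}. The two points you flag as obstacles (the $L^{2}$ bound on $\phi_{|m_{0}|}$ and the periodicity bookkeeping for $B^{j-1}\gamma$ in the cross terms) are passed over silently in the paper, so your outline is, if anything, more explicit about what remains to be checked.
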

 \begin{proof}
 	Suppose that $M \in \tilde{F}_{0}$ a generalized filter. For $m_{0}\in M$, define the signum function $\mu$ such that
 	$$\mu(\omega)=\left\{\begin{matrix}
                 \frac{m_{0}(\omega)}{|m_{0}(\omega)|}& m_{0}(\omega)\neq 0;\\ 
 	             1 & m_{0}(\omega)=0. 
              	\end{matrix}\right. $$
  Clearly, $\mu$ is a measurable unimodular function and we see the following equation holds for a.e. $\omega$           	
   $$ m_{0}(\omega)=\mu(\omega)|m_{0} (\omega)|.$$ 
   By Lemma \eqref{22}, there exists a unimodular measurable function $v$ such that 
   $$\mu(\omega)=v(B\omega)\overline{v(\omega)},  \quad \text{ a.e. }\omega.$$	
    By \eqref{15}, we have the function $\hat{\phi}_{|m_{0}|}(\omega)$ from $|m_{0}|$. Let          	         	
    \begin{equation}\label{19}
     \hat{\phi}(\omega)=v(\omega)\hat{\phi_{|m_{0}|}}(\omega),
    \end{equation}
    then, $\phi \in L^{2}(G)$. Using \eqref{15},\eqref{18}, \eqref{19} and the definition of the signum function $\mu$, we have
    \begin{align*}
    \hat{\phi}(B\omega) &=v(B\omega)\hat{\phi_{|m_{0}|}}(\omega)\\
    &=v(B\omega)|m_{0}(\omega)||\hat{\phi_{|m_{0}|}}(\omega)\\
    &==v(B\omega)|m_{0}(\omega)|\overline{v(\omega)}\hat{\phi}(\omega)\\
    &=\mu(\omega)|m_{0}(\omega)|\hat{\phi}(\omega)\\
    &=m_{0}(\omega)\hat{\phi}(\omega).
    \end{align*}
  Thus, $\phi$ is a pseudo-scaling function.  
  For $\hat{\Psi}=\{\psi_{1},\psi_{2},...,\psi_{p-1}\}$, let $\hat{\psi}_{i}=W_{\alpha}(\omega)s_{i}(B\omega)\overline{M(\omega)}\hat{\phi}(\omega)$, $1 \leq i \leq p-1$, a.e. $\omega$,
 where $s_{i}(\omega) \in L^{2}(G),m_{i}\in M$. By Theorem 2.3, we see that $\Psi$ is an MRA PFMW.
 Note that $m_{0}$ is a generalized low pass filter, that implies $lim_{j \rightarrow \infty}|\hat{\phi}(B^{-j}\omega)|=1$, a.e. $\omega$ holds. By using Lemma \eqref{21}, we get
 \begin{align*}
 \sum_{i=1}^{p-1}\sum_{j \in \mathbb{Z}}|\hat{\psi}_{i}(B^{j}\omega)|^{2} 
 &= \sum_{i=1}^{p-1}\sum_{j \in \mathbb{Z}}|m_{i}(B^{j-1}\omega)|^{2}|\hat{\phi}_{i}(B^{j-1}\omega)|^{2}\\
 &=lim_{n \rightarrow \infty}\sum_{j=-n}^{n}(\sum_{i=1}^{p-1}|m_{i}(B^{j-1}\omega)|^{2})|\hat{\phi}(B^{j-1}\omega)|^{2}\\
 &=lim_{n \rightarrow \infty}\sum_{j=-n}^{n}(1-|m_{0}(B^{j-1}\omega)|^{2})|\hat{\phi}(B^{j-1}\omega)|^{2}\\
 &=lim_{n \rightarrow \infty}\{|\hat{\phi}(B^{-n-1}\omega)|^{2}-|\hat{\phi}(B^{n}\omega)|^{2}\}.
 \end{align*}
 Using $\phi \in L^{2}(G)$, Lemma $\eqref{21}$ implies $lim_{n \rightarrow \infty}|\hat{\phi}(B^{n}\omega)|^{2}=0$ for a.e. $\omega$.
 Then for  a.e $\omega$, $lim_{n \rightarrow \infty}|\hat{\phi}(B^{-n-1}\omega)|^{2}=1$ holds.
Now to show that $\Psi$ given above satisfies the second condition of Theorem 2.3.
Fixthe $\omega$ and $q = Bh \oplus \gamma,\text{ where } h \in H, \gamma \in H^{\perp}/BH^{\perp}$ and split the equation as\\
$\sum_{i=1}^{p-1}\sum_{j=0}^{\infty}\hat{\psi}_{i}(B^{j}\omega)\overline{\hat{\psi}_{i}(B^{j}(\omega+q))}$
\begin{equation}\label{20}
=\sum_{i=1}^{p-1}\hat{\psi}_{i}(\omega)\overline{\hat{\psi}_{i}(\omega+q)}+\sum_{i=1}^{p-1}\sum_{j=1}^{\infty}\hat{\psi}_{i}(B^{j}\omega)\overline{\hat{\psi}_{i}(B^{j}(\omega+q)}
\end{equation}
 Using \eqref{13}, \eqref{14} and Lemma
 \eqref{21}, the first term on the right-hand side of \eqref{20} have the following equation.   
 
\begin{align*}
\sum_{i=1}^{p-1}\hat{\psi}_{i}(\omega)\overline{\hat{\psi}_{i}(\omega+q)}\\
&=\sum_{i=1}^{p-1}W_{\alpha}(B^{-1}\omega)s_{i}(\omega)m_{i}(B^{-1}\omega)\hat{\phi}(B^{-1}\omega)\\
&*\overline{W_{\alpha}(B^{-1}(\omega+q))}\overline{s_{i}(\omega+q)m_{i}(B^{-1}\omega+q)\hat{\phi}(B^{-1}\omega+q)}\\
&=\overline{W_{\alpha}(B^{-1}q)}(\sum_{i=1}^{p-1}|s_{i}(\omega)|^{2}m_{i}(B^{-1}\omega)\overline{m_{i}(B^{-1}(\omega+q))})*\hat{\phi}(B^{-1}\omega)\overline{\hat{\phi}(B^{-1}(\omega+q))}\\
&=\overline{W_{\alpha}(B^{-1}q)}\overline{m_{0}(B^{-1}(\omega+B^{-1}q)})*\hat{\phi}(B^{-1}\omega)\overline{\hat{\phi}(B^{-1}(\omega+q))}\\
&=-\hat{\phi}(\omega)\overline{\hat{\phi}(\omega+q)}.
\end{align*}   
 By \eqref{12}, \eqref{14}, \eqref{17} and Lemma \eqref{21}, for the second term on the right-hand side of \eqref{20}, we have   
  \begin{align*}
  \sum_{i=1}^{p-1}\sum_{j=1}^{\infty}\hat{\psi}_{i}(B^{j}\omega)\overline{\hat{\psi}_{i}(B^{j}(\omega+q))}\\
  &=\sum_{i=1}^{p-1}\sum_{j=1}^{\infty}W_{\alpha}(B^{j-1}\omega)s_{i}(B^{j}\omega)m_{i}(B^{j-1}\omega)\hat{\phi}(B^{j-1}\omega)\\
  &*\overline{W_{\alpha}(B^{j-1}(\omega+q))}\overline{s_{i}(B^{j}(\omega+q))m_{i}(B^{j-1}(\omega+q))\hat{\phi}(B^{j-1}(\omega+q))}\\
  &=\sum_{j=1}^{\infty}\overline{W_{\alpha}(B^{j-1}q)}(\sum_{i=1}^{p-1}|s_{i}(B^{j}\omega)|^{2}m_{i}(B^{j-1}\omega)\overline{m_{i}(B^{j-1}(\omega+q))})\\
  &*\hat{\phi}(B^{j-1}\omega)\overline{\hat{\phi}(B^{j-1}(\omega+q))}\\
  &=\sum_{j=1}^{\infty}(1-|m_{0}(B^{j-1}(\omega))|^{2})\hat{\phi}(B^{j-1}\omega)\overline{\hat{\phi}(B^{j-1}(\omega+q))}\\
  &=\sum_{j=1}^{\infty}\{\hat{\phi}(B^{j-1}\omega)\overline{\hat{\phi}(B^{j-1}(\omega+q))}-\hat{\phi}(B^{j}\omega)\overline{\hat{\phi}(B^{j}(\omega+q))}\}\\
  &=\hat{\phi}(\omega)\overline{\hat{\phi}(\omega+q)}-lim_{p \rightarrow \infty}\hat{\phi}(B^{p}\omega)\overline{\hat{\phi}(B^{p}(\omega+q))}\\
  &=\hat{\phi}(\omega)\overline{\hat{\phi}(\omega+q)}.
  \end{align*}            	
   
 Adding both the result that we obtained above, right hand side of \eqref{20} is equal to 0. Hence, $\Psi$ is a PFMW by theorem 2.3.  	
 \end{proof}

\end{document}